\newtheorem{thm}{Theorem}[section]
\newtheorem{cor}[thm]{Corollary}
\newtheorem{lem}[thm]{Lemma}
\newtheorem{rem}[thm]{Remark}
\numberwithin{equation}{section}
\begin{document}

%%%%%%%%%%%%%%%%%%%%%%%%%%%%%%%%%%%%%%%%%%%%%%%%%%%%%%
\title[Fourier decay bound and differential images of self-similar measures]
{Fourier decay bound and differential images of self-similar measures}
%%%%%%%%%%%%%%%%%%%%%%%%%%%%%%%%%%%%%%%%%%%%%%%%%%%%%%

\thanks{{\it 2000 Mathematics Subject Classification:} 42A38, 28A80, 11K16.}
\thanks{{\it Key words:} Fourier transforms, Self-similar measures, Power decay, Normal numbers}

\author{Yuanyang Chang}
\address{Department of Mathematics, South China University of Technology, Guangzhou, 510640, P. R. China}
\email{chrischang2016@gmail.com}
%\thanks{Yuanyang Chang is partially supported by}

\author{Xiang Gao$^{\ast}$}
\address{School of Mathematics and Statistics, Wuhan University, Wuhan, 430072, P. R. China}
\email{gaojiaou@gmail.com}
 \thanks{$^{\ast}$corresponding author.}
%\thanks{Xiang Gao is partially supported by}

\date{}
\maketitle{}

\begin{abstract}
In this note, we investigate $C^2$ differential images of the homogeneous self-similar measure associated with an IFS $\mathcal{I}=\{\rho x+a_j\}_{j=1}^m$ satisfying the strong separation condition and a positive probability vector $\vec{p}$. It is shown that the Fourier transforms of such image measures have power decay for any contractive ratio $\rho\in (0, 1/m)$, any translation vector $\vec{a}=(a_1, \ldots, a_m)$ and probability vector $\vec{p}$, which extends a result of Kaufman on Bernoulli convolutions. Our proof relies on a key combinatorial lemma originated from Erd\H{o}s, which is important in estimating the oscillatory integrals. An application to the existence of normal numbers in fractals is also given.
\end{abstract}

\bigskip
%%%%%%%%%%%%%%%%%%%%%%%%%%%%%%%%%%%%%%%%%%%%%%%%%%%%%%%%%%%%%%%%%%%
%%%%%%%%%%%%%%%%%%%%%%%%%%%%%%%%%%%%%%%%%%%%%%%%%%%%%%%%%%%%%%%%%%%
\section{Introduction}
Let $\mu$ be a Borel probability measure on $\mathbb{R}$. The Fourier transform of $\mu$ is defined by
\begin{equation}\label{Fourier}
 \widehat{\mu}(\xi)=\int \exp(2\pi i\xi t)d\mu(t), \;\; \xi\in\mathbb{R}.
\end{equation}
It is known that the asymptotic behaviors of $\widehat{\mu}(\xi)$ at infinity give information to absolute continuity or singularity, the geometric or arithmetic structure, and the size of the support of $\mu$ (see for instance \cite{Mat15} for more details). In this note, we are concerned with a special class of Borel probability measures, that is, the differential images of some homogeneous self-similar measures. The motivation is to see how the asymptotic behaviors of the Fourier transform are affected when a self-similar measure is smoothly perturbated.

\vspace{1mm}
Recall that an iterated function system (IFS) is a finite family $\mathcal{I}=\left\{f_j\right\}_{j=1}^m (m \geq 2)$ of strictly contracting maps on some complete metric space $X$ (here we always assume that $X=\mathbb{R}$). An IFS $\mathcal{I}$ is called linear (resp. of class $C^{\alpha}$) if all the maps in $\mathcal{I}$ are affine (resp. of class $C^{\alpha}$). An IFS $\mathcal{I}$ is called $C^{\alpha}$-conjugate to another IFS $\mathcal{J}$ if $\mathcal{J}=\left\{\varphi\circ f_j \circ\varphi^{-1}\right\}_{j=1}^m$ for some $C^{\alpha}$-diffeomorphism $\varphi$. It is well known that there exists a unique non-empty compact set $K=K(\mathcal{I})\subset \mathbb{R} $, called the attractor of $\mathcal{I}$, such that
$K=\bigcup_{j=1}^{m}f_j(K)$. We say that the strong separation condition holds for $\mathcal{I}$ if the pieces $\left\{f_j(K)\right\}_{j=1}^m$ are pariwise disjoint. For a measurable map $f:\mathbb{R}\rightarrow\mathbb{R}$ and a Borel probability measure $\mu$, we use $f \mu$ to denote the push-forward measure of $\mu$ by $f$, that is, $f \mu(A)=\mu\left(f^{-1}(A)\right)$ for all Borel subsets $A\subset\mathbb{R}$. $f \mu$ is also called the image measure of $\mu$ under $f$. Given an IFS $\mathcal{I}=\left\{f_j\right\}_{j=1}^m$ and a positive probability vector $\vec{p}=(p_1, \ldots, p_m)$, namely, $\sum_{j=1}^m p_j=1$ and $p_j>0$ for each $j$, there exists a unique Borel probability measure $\mu$ supported on $K$ such that
\begin{equation}\label{selfsim}
 \mu=\sum_{j=1}^{m} p_j \cdot {f_j}\mu.
\end{equation}

In this note we mainly consider linear IFSs, that is, $f_j(x)=\rho_j x+a_j$ with $0<\rho_j<1$ and $a_j\in\mathbb{R}$ for all $j=1,\ldots, m$. Then each $f_j$ is a similitude in the sense that $|f_j(x)-f_j(y)|=\rho_j |x-y|$ for any $x, y\in\mathbb{R}$. The attractor $K$ and the measure $\mu$ satisfying \eqref{selfsim} are termed as a self-similar set and self-similar measure respectively. We are particularly interested in IFS of the form $\mathcal{I}=\{\rho x+a_j\}_{j=1}^m$, where $\rho\in (0, 1)$ is the common contractive ratio and $\vec{a}=(a_1,\ldots, a_m)$ denotes the translation vector. We call the corresponding self-similar measure $\mu_{\rho}^{\vec{p}, \vec{a}}$ a homogenous self-similar measure and write it as $\mu_{\rho}$ for simplicity. When $m=2$ and $p=(1/2, 1/2)$, $\mu_{\rho}$ is the so-called (infinite) Bernoulli convolution, which was studied extensively but is still not completely understood (see \cite{PSS00} for a good survey).  It is well known that $\mu_{\rho}$ can be viewed as the weak limit of the $(N+1)$-fold convolution product $\ast_{k=0}^N (p_1\delta_{\rho^k a_1}+\cdots +p_m\delta_{\rho^k a_m})$ \cite{Mat15}. Moreover, the Fourier transform $\widehat{\mu_{\rho}}$ has the form of infinite product:
\begin{equation}\label{Fourier-sim}
  \widehat{\mu_{\rho}}(\xi)=\int \exp(2\pi i  \xi t) d\mu_{\rho}(t)=\prod_{k=0}^{\infty}\sum_{j=1}^{m}p_j\exp(2\pi i  a_j \xi \rho^{k}), \; \xi\in\mathbb{R}.
\end{equation}

Asymptotic behaviors of $\widehat{\mu_{\rho}}$ were studied by many authors. For the Bernoulli convolutions, the Erd\H{o}s-Salem theorem
\cite{Er39, Sal63} states that $\widehat{\mu_{\rho}}$ has no decay at infinity if and only if $\rho^{-1}$ is a Pisot number and $\rho\neq 1/2$ (recall that an algebraic integer $\beta>1$ is called a Pisot number if all its conjugate roots have modulus strictly less than one). For general homogeneous self-similar measures with $m>2$, Hu \cite{Hu01} obtained similar criterions in some special cases, where the decay properties of $\widehat{\mu_{\rho}}$ is also affected by the translation vector $\vec{a}$ and the probability vector $\vec{p}$. Explicit examples of $\mu_{\rho}$ whose Fourier transforms have certain decay rate were given in \cite{ BS14, Gar62, Ker36} (Bernoulli convolutions) and \cite{DFW07, GM17, Wat12} (general homogeneous self-similar measures). Note that in all cases the arithmetic properties of $\rho$ play an important role. Furthermore, Shmerkin \cite{Shm14} showed that there exists a set $E\subset (0, 1)$ of Hausdorff dimension 0 such that for all $\rho\in (0, 1)\setminus E$, all translation vectors $\vec{a}$ with distinct coordinates and all positive probability vector $\vec{p}$, the Fourier transform $|\widehat{\mu_{\rho}}(\xi)|$ tends to 0 polynomially.

\vspace{1mm}
In this note, we study the Fourier decay properties of the image measure of $\mu_{\rho}$ under a differential function. The strong separation condition is assumed to hold for the corresponding IFS, thus $\rho\in (0, 1/m)$ and $\mu_{\rho}$ is supported on a set of Cantor type.

\begin{thm}\label{mainthm}
Let $\mu_{\rho}$ be the homogeneous self-similar measure associated with an IFS $\mathcal{I}=\{\rho x+a_j\}_{j=1}^m$ that satisfies the strong separation condition and a positive probability vector $\vec{p}$. Then for any $\varphi\in C^2(\mathbb{R})$ with $\varphi''>0$ and any $g\in C^1(\mathbb{R})$, we have
\begin{equation}\label{osc-int}
  \Big|\int \exp(2\pi i\xi \varphi(t)) g(t) d\mu_{\rho}(t)\Big|=O(|\xi|^{-\gamma}), \; |\xi|\rightarrow \infty
\end{equation}
for some constant $\gamma>0$. In particular, the Fourier transform $\widehat{\varphi \mu_{\rho}}$ of the image measure $\varphi \mu_{\rho}$ has a power decay.
\end{thm}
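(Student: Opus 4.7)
Write $I(\xi)$ for the integral in \eqref{osc-int}. The strategy is to exploit the self-similarity of $\mu_\rho$ at a carefully chosen scale $\rho^N$ (with $N=O(\log|\xi|)$), reduce $I(\xi)$ via Taylor expansion to a weighted sum of Fourier coefficients of $\mu_\rho$ at large frequencies, and then estimate that sum using the infinite product \eqref{Fourier-sim} and an Erd\H{o}s-type combinatorial lemma.

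Iterating \eqref{selfsim} $N$ times gives $\mu_\rho=\sum_{\mathbf{j}\in\{1,\ldots,m\}^N}p_{\mathbf{j}}(f_{\mathbf{j}})_*\mu_\rho$, where $f_{\mathbf{j}}(t)=\rho^N t+b_{\mathbf{j}}$, $b_{\mathbf{j}}=\sum_{k=0}^{N-1}a_{j_{k+1}}\rho^k$, and $p_{\mathbf{j}}=\prod_k p_{j_k}$. Substitute this into $I(\xi)$ and Taylor-expand $\varphi$ and $g$ about each $b_{\mathbf{j}}$, using that the attractor is compact. The quadratic phase remainder is $O(|\xi|\rho^{2N})$, so I take $N$ slightly larger than $\log|\xi|/(2|\log\rho|)$ so that this remainder is a small power of $|\xi|^{-1}$ while $\rho^N\approx|\xi|^{-1/2}$. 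This reduces the problem to proving $|S(\xi)|\lesssim|\xi|^{-\gamma}$ for some $\gamma>0$, where
\[
S(\xi):=\sum_{\mathbf{j}}p_{\mathbf{j}}\,g(b_{\mathbf{j}})\,e^{2\pi i\xi\varphi(b_{\mathbf{j}})}\,\widehat{\mu_\rho}\bigl(\xi\rho^N\varphi'(b_{\mathbf{j}})\bigr).
\]

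Set $\eta_{\mathbf{j}}:=\xi\rho^N\varphi'(b_{\mathbf{j}})$; by construction $|\eta_{\mathbf{j}}|\asymp|\xi|^{1/2}$. The product formula \eqref{Fourier-sim} gives $|\widehat{\mu_\rho}(\eta)|=\prod_{k\ge 0}|P(\rho^k\eta)|$, where $P(u):=\sum_{j=1}^m p_j e^{2\pi i a_j u}$. Because the $a_j$ are distinct (forced by the strong separation condition) and $\vec{p}$ is strictly positive, $|P|\le 1$ with equality only on a discrete ``bad'' set $B$, and $|P(u)|\le 1-c\,\mathrm{dist}(u,B)^2$ in a neighbourhood of $B$. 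The plan is to show that for all but a weighted fraction of size $O(|\xi|^{-\delta})$ of the indices $\mathbf{j}$, at least a positive proportion of the frequencies $\rho^k\eta_{\mathbf{j}}$, $0\le k<N$, stays away from a fixed neighbourhood of $B$; for these good $\mathbf{j}$ one then gets $|\widehat{\mu_\rho}(\eta_{\mathbf{j}})|\lesssim(1-c')^{c''N}\lesssim|\xi|^{-\gamma_0}$. Splitting $S(\xi)$ into good and bad contributions, the good ones contribute at most $\|g\|_\infty|\xi|^{-\gamma_0}$ and the bad ones at most $\|g\|_\infty|\xi|^{-\delta}$, yielding the claimed power decay.

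The main obstacle is the combinatorial statement of the previous paragraph. The strong separation condition gives $\gtrsim\rho^N$-separation of the level-$N$ translates $b_{\mathbf{j}}$, and $\varphi''>0$ on the compact attractor makes $\varphi'$ bi-Lipschitz there, so the $\eta_{\mathbf{j}}$ are $\gtrsim 1$-separated. The Erd\H{o}s-type counting lemma announced in the abstract will then be used, scale by scale, to bound the weighted count $\sum_{\mathbf{j}:\,\rho^k\eta_{\mathbf{j}}\in B_\varepsilon}p_{\mathbf{j}}$, where $B_\varepsilon$ is an $\varepsilon$-neighbourhood of $B$. The hypothesis $\rho<1/m$ (enforced by SSC) is what prevents too many multi-indices from clustering at any single scale, enabling a Borel--Cantelli/pigeonhole argument across the $N$ scales to isolate the good indices. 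Combined with the Taylor error this completes \eqref{osc-int}.
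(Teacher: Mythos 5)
Your overall architecture coincides with the paper's: your level-$N$ iteration of \eqref{selfsim} is exactly the paper's convolution splitting $\mu_\rho=\mu_{N_1}\ast\eta_{N_1}$, your Taylor step reproduces their $\Upsilon_1,\Upsilon_2$ with the same error $O(|\xi|\rho^{2N})$, and your sum $S(\xi)$ is their $\Upsilon_3=\int|\widehat{\eta_{N_1}}(\xi\varphi'(x))|\,d\mu_{N_1}(x)$. (One caveat on the scale: you need $\rho^N\asymp|\xi|^{-\beta}$ with $\beta$ strictly in $(1/2,1)$ \emph{and} $(2-\alpha)\beta<1$ where $\alpha=\log p_l/\log\rho$; this second constraint matters in the endgame.) However, the part where you actually extract decay has two genuine gaps.

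First, your structural claim about $P(u)=\sum_j p_je^{2\pi ia_ju}$ does not deliver what you use. You want $|P(u)|\le 1-c'$ whenever $u$ avoids a fixed neighbourhood of the discrete set $B=\{|P|=1\}$. For $m\ge 3$ with rationally independent differences $a_j-a_{j'}$ one has $B=\{0\}$, yet $\limsup_{|u|\to\infty}|P(u)|=1$ by Kronecker's theorem, so no such uniform bound off a neighbourhood of $B$ exists; the set $\{|P|>1-c'\}$ is an unbounded, non-lattice set. The paper sidesteps this by throwing away all but the two summands with weights $p_l=\max p_j$ and $p_s=\min p_j$: it bounds $|P(u)|\le|p_s+p_le^{2\pi i(a_l-a_s)u}|+1-p_l-p_s\le 1-\tfrac{2p_l}{1+\theta}=:\delta<1$ whenever $\|(a_l-a_s)u\|\ge\tfrac{1}{2(1+\theta)}$. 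This makes the bad set a genuine periodic union of intervals in the single variable $(a_l-a_s)u$, which is precisely the form the combinatorial lemma can handle. You need this (or an equivalent) reduction.

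Second, and more seriously, your proposed application of the counting lemma --- bounding $\sum_{\mathbf{j}:\rho^k\eta_{\mathbf{j}}\in B_\varepsilon}p_{\mathbf{j}}$ \emph{scale by scale} and then running a Borel--Cantelli/pigeonhole argument over $k$ --- does not close. The measure $\sum_{\mathbf{j}}p_{\mathbf{j}}\delta_{\eta_{\mathbf{j}}}$ is a discrete approximation of the singular measure $(\,\cdot\,)\varphi'\mu_\rho$ rescaled, and a singular measure can assign mass bounded below to an $\varepsilon$-neighbourhood of a lattice for every $\varepsilon>0$; so the per-scale bad fraction need not be $O(\varepsilon)$, and no union bound over $N$ scales follows. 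The paper's Lemma 2.1 works globally across scales: the key point (Lemma 2.2) is that if $\|c_0\theta^kx\|$ and $\|c_0\theta^{k+1}x\|$ are both $<\tfrac{1}{2(1+\theta)}$ then the integer part $r_{k+1}$ is \emph{forced} by $r_k$, so the set of $x$ for which more than $(1-\epsilon)N$ of the scales are bad is covered by only $e^{\omega(\epsilon)N}$ intervals of length $c_0^{-1}\theta^{-N}$. One then bounds the $\mu_{N_1}$-mass of each covering interval by (number of atoms it contains, controlled by the SSC separation $\rho^{N_1}$) times the maximal atom weight $p_l^{N_1}=|\xi|^{-\alpha\beta}$, which is where the hypothesis $(2-\alpha)\beta<1$ enters; the complement contributes $\delta^{\epsilon N}=|\xi|^{-\epsilon(1-\beta)\log\delta/\log\rho}$. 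This covering-plus-mass-bound step is the heart of the proof and is absent from your plan.
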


\begin{rem}
 \begin{enumerate}
\item The above theorem indicates that the Fourier decay bound of $\widehat{\varphi \mu_{\rho}}$ is polynomial for any parameters $\rho, \vec{p}, \vec{a}$,
      even though the Fourier transform $\widehat{\mu_{\rho}}$ of the original measure $\mu_{\rho}$ admits no decay at infinity. This means that smooth permutations could change the Fourier decay behaviors of measures, where the non-linearity of the phase function is crucial for obtaining the power decay (otherwise one can easily find a counterexample, for instance, $\varphi(x)=x$ and $\mu_{\rho}$ is the Cantor-Lebesgue measure on the middle-third Cantor set). We remark that the theorem also holds in the case of $\varphi''<0$, where the proof needs only a slight modification. It seems that the condition could be weakened as long as the graph of $\varphi$ is curved enough.

\item The fact that the Fourier decay properties of push-forward measures are closely related to the smoothness of the phase functions date back to van der
      Corput's lemma in harmonic analysis (see \cite{Mat15}). Kaufman \cite{Kau84}, by rediscovering a method of Erd\H{o}s \cite{Er40}, proved that the Fourier transform of $C^2$ differential images of a class of Bernoulli convolutions has power decay. Applying the same method, we extend Kaufman's result to general homogeneous self-similar measures. Moreover, our proof shows that the constant $\gamma$ in \eqref{osc-int} could be taken explicitly. In fact, one can choose appropriate $\beta\in (1/2, 1)$ and $\epsilon\in (0, \delta)$ with $(2-\alpha)\beta<1$ such that
      $\min\{2\beta-1, \frac{(1-\beta)\epsilon\log\delta}{\log \rho}\}$ attains the maximum, and then let $\gamma$ be this maximum.
 \end{enumerate}
\end{rem}

The main ingredients of the proof  are the convolution nature of the homogeneous self-similar measure $\mu_{\rho}$ and a combinatorial lemma
originated from  Erd\H{o}s \cite{Er40}. The former allows us to approximate $\mu_{\rho}$ by a sequence of discrete measures; the latter can be used to control the ``bad'' set for the purpose of estimating the oscillatory integral. Recently, by using an estimate on decay of exponential sums due to Bourgain, Bourgain-Dyatlov \cite{BD17} proved similar results for the Patterson-Sullivan measure on the limit set $\Lambda_{\Gamma}$ of a co-compact subgroup $\Gamma\leq SL(2, \mathbb{Z})$, where the nonlinearity of transformations in $\Gamma$ is very important. Measures whose Fourier transforms admit power decay were also studied for stationary measures \cite{Li17} and for Gibbs measure with respect to the Gauss map \cite{JS16, Kau81, QR03}.

\smallskip
This note is part of the second author's PhD thesis (in Chinese). When we are going to submit this note, we hear that a version of Theorems \ref{mainthm} is simultaneously and independently proved by Mosquera and Shmerkin (see Theorem 3.1 of \cite{MS17}) among some other results. However, we remark that our method
allows us to obtain a better decay bound (see Remark 1.1.(2)), besides the combinatorial lemma established in this note is different from theirs.

\smallskip
The asymptotic behavior of $\widehat{\mu_{\rho}}$ is closely related to the arithmetic structure of $\mu_{\rho}$. We give an application of Theorem \ref{mainthm} in this spirit. Recall that a sequence $\{x_n\}_{n\geq 1}$ in $\mathbb{R}$ is called equidistributed modulo 1 if for any subinterval $I\subset [0, 1]$, we have
\begin{equation*}
 \lim\limits_{N\rightarrow\infty}\frac{1}{N}\#\big\{n: 1\leq n \leq N, \{x_n\}\in I\big\}=|I|,
\end{equation*}
where $\{x_n\}$ stands for the fractional part of $x_n$ and $|I|$ denotes the length of $I$. For a fixed integer $b>1$, if the sequence $\{b^n x\}_{n\geq 1}$ is equidistributed modulo 1, then we say that $x$ is normal in base $b$, or equivalently $b$-normal. Weyl's equidistribution criterion provides a useful tool to deal with the problem of normality. Motivated by this, Davenport-Erd\H{o}s-Leveque \cite{DEL63} established a criterion to check whether a random number with respect to a Borel probability measure $\mu$ is normal or not. It was reformulated by Queff\'{e}lec-Ramar\'{e} \cite{QR03} as follows:
\begin{thm}\label{DEL}(Davenport-Erd\H{o}s-Leveque \cite{DEL63, QR03})
Let $\mu$ be a Borel probability measure on $\mathbb{R}$ and $\{s_n\}_{n\geq 1}$ a sequence of positive integers. If for any integer $h\neq 0$,
\begin{equation}\label{DEL-formula}
  \sum_{N=1}^{\infty}\frac{1}{N^3}\sum_{m, n=1}^{N}\widehat{\mu}(h(s_n-s_m))<\infty,
\end{equation}
then the sequence $\{s_n x\}_{n\geq 1}$ is equidistributed modulo 1 for $\mu$ almost every $x$. \par
In particular, when $|\widehat{\mu}(\xi)|=O(|\xi|^{-\gamma})$ for some $\gamma>0$ as $|\xi|\rightarrow\infty$, then \eqref{DEL-formula} holds for every strictly increasing sequence $\{s_n\}_{n\geq 1}$, which implies that $\mu$ almost every $x$ is normal in any bases.
\end{thm}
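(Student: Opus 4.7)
The final statement has two parts: the Davenport--Erd\H{o}s--LeVeque criterion itself and its consequence for measures with polynomial Fourier decay. I plan to address them in turn.

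\textbf{The criterion.} The plan is to reduce to Weyl's equidistribution criterion. Fix $h\in\mathbb{Z}\setminus\{0\}$ and set
\[
T_N(x):=\sum_{n=1}^N e^{2\pi i h s_n x},\qquad S_N(x):=\frac{T_N(x)}{N}.
\]
By Weyl's criterion, equidistribution of $\{s_n x\}$ modulo $1$ is equivalent to $S_N(x)\to 0$ for every $h\neq 0$. Expanding $|S_N|^2$ and integrating against $\mu$ gives
\[
\int |S_N(x)|^2\,d\mu(x)=\frac{1}{N^2}\sum_{m,n=1}^N \widehat{\mu}\bigl(h(s_n-s_m)\bigr),
\]
so the hypothesis \eqref{DEL-formula} is exactly $\sum_N \tfrac{1}{N}\int |S_N|^2\,d\mu<\infty$. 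Tonelli then yields $\sum_N |S_N(x)|^2/N<\infty$ for $\mu$-a.e.\ $x$.

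The second, more delicate step is to upgrade this summability to the pointwise statement $S_N(x)\to 0$ a.e. Here I would exploit the one-step estimate $\bigl||T_{N+1}|-|T_N|\bigr|\leq 1$, which makes $|T_N|$ $1$-Lipschitz in the discrete variable $N$. Consequently, if $|S_N(x_0)|\geq \epsilon$ then $|S_M(x_0)|\geq \epsilon/2$ for every integer $M\in[N,(1+\epsilon/3)N]$, an interval of length $\asymp \epsilon N$. Suppose for contradiction that at some good $x_0$ one has $|S_N(x_0)|\geq \epsilon$ infinitely often; a greedy selection extracts a subsequence $N_k$ with $N_{k+1}>(1+\epsilon/3)N_k$, giving disjoint intervals whose contributions to $\sum_M |S_M(x_0)|^2/M$ are each $\gtrsim \epsilon^3$, so the sum diverges---contradiction. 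A countable union over rational $\epsilon>0$ concludes that $S_N\to 0$ $\mu$-a.e., which by Weyl is equidistribution.

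\textbf{Polynomial decay case.} Assuming $|\widehat{\mu}(\xi)|\leq C|\xi|^{-\gamma}$ for $\xi\neq 0$, the strict monotonicity of $\{s_n\}$ in $\mathbb{Z}_{>0}$ gives $|s_n-s_m|\geq |n-m|$. The diagonal contributes $N$ to the double sum and the off-diagonal is bounded by
\[
2\sum_{1\leq m<n\leq N} C|h(n-m)|^{-\gamma}\leq C'|h|^{-\gamma}\sum_{k=1}^{N-1}(N-k)\,k^{-\gamma},
\]
which is $O(N^{2-\gamma})$ for $\gamma<1$, $O(N\log N)$ for $\gamma=1$ and $O(N)$ for $\gamma>1$. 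In every case, division by $N^3$ leaves a tail summable in $N$, so \eqref{DEL-formula} holds for any strictly increasing $\{s_n\}$. Specializing to $s_n=b^n$ for each integer $b>1$ yields $b$-normality at $\mu$-a.e.\ $x$, and intersecting over $b$ gives normality in every base. The main obstacle is the passage from a.e.\ $L^2$-summability of the Weyl averages to their a.e.\ pointwise decay: it is not a direct Borel--Cantelli argument but rather uses the discrete Lipschitz constraint on $|T_N|$ to spread pointwise largeness of $|S_N|$ over a macroscopic interval, producing the summability contradiction above.
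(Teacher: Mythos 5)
Your proposal is correct, but note that the paper itself does not prove this statement: Theorem 1.2 is quoted from \cite{DEL63, QR03}, with only the remark that extending it from measures on $[0,1]$ to measures on $\mathbb{R}$ requires a slight modification of the argument in \cite{QR03}. What you have written is essentially a faithful reconstruction of the classical proof in those references: the reduction via Weyl's criterion and Tonelli to $\sum_N |S_N(x)|^2/N<\infty$ a.e., followed by the upgrade to $S_N(x)\to 0$ using the discrete Lipschitz bound $\bigl||T_{N+1}|-|T_N|\bigr|\leq 1$ to spread largeness of $|S_N|$ over an interval of length $\asymp\epsilon N$ and contradict summability via disjoint blocks; your numerical check ($|S_M|\geq \tfrac{2\epsilon/3}{1+\epsilon/3}\geq \epsilon/2$ for $\epsilon\leq 1$, each block contributing $\gtrsim \epsilon^3$) is sound, and the polynomial-decay computation, with the diagonal contributing $N$ and the off-diagonal $O(N^{2-\gamma})$, $O(N\log N)$ or $O(N)$, correctly yields summability after division by $N^3$. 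The only point worth making explicit is the final countable intersection: over all $h\in\mathbb{Z}\setminus\{0\}$ (Weyl's criterion requires every $h$) and over all bases $b>1$ (taking $s_n=b^n$ for each $b$), both of which your argument implicitly uses and which pose no difficulty.
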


Indeed, in \cite{QR03}, Theorem \ref{DEL} was stated for the Borel probability measures on $[0, 1]$. However, the same result holds for any Borel probability measures on $\mathbb{R}$ with a slight modification of their proof.  The next corollary is an immediate consequence of the combination of Theorem \ref{mainthm} and \ref{DEL}, which reproduces some results of Hochman-Shmerkin on the existence of normal numbers in fractals (see Section 1.3.1 in \cite{HS15}).

\begin{cor}
  Let $\mu_{\rho}$ and $\varphi$ be as in Theorem 1.1, then for $\mu_{\rho}$ almost every $x$, $\{s_n \varphi(x)\}_{n\geq 1}$ is equidistributed modulo 1
for any strictly increasing sequence $\{s_n\}_{n\geq 1}$. In particular, for $\mu_{\rho}$ almost every $x$, $\varphi(x)$ is $b$-normal for every base $b$.
\end{cor}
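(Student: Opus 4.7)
The plan is to combine Theorem \ref{mainthm} with Theorem \ref{DEL} applied to the image measure $\varphi\mu_{\rho}$, and then transfer the resulting almost-sure statement back to $\mu_{\rho}$ using the definition of push-forward. The argument should be short, since the analytic content is already contained in Theorem \ref{mainthm}.

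First, I would observe that the Fourier transform of the image measure is exactly the oscillatory integral in \eqref{osc-int} with $g\equiv 1$, namely
\begin{equation*}
  \widehat{\varphi\mu_{\rho}}(\xi)=\int\exp(2\pi i\xi y)\,d(\varphi\mu_{\rho})(y)=\int\exp(2\pi i\xi\varphi(t))\,d\mu_{\rho}(t).
\end{equation*}
Since $\varphi\in C^2(\mathbb{R})$ with $\varphi''>0$, Theorem \ref{mainthm} (with $g\equiv 1$) yields $|\widehat{\varphi\mu_{\rho}}(\xi)|=O(|\xi|^{-\gamma})$ for some $\gamma>0$ as $|\xi|\to\infty$.

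Next, I would feed this polynomial decay into the ``in particular'' clause of Theorem \ref{DEL}, taking $\mu=\varphi\mu_{\rho}$. This guarantees that \eqref{DEL-formula} holds for every strictly increasing sequence $\{s_n\}_{n\geq 1}$ of positive integers, so the first part of Theorem \ref{DEL} implies that, for any such $\{s_n\}$, the sequence $\{s_n y\}_{n\geq 1}$ is equidistributed modulo $1$ for $\varphi\mu_{\rho}$-almost every $y$. Writing $A_{s}=\{y\in\mathbb{R}:\{s_n y\}_{n\geq 1}\text{ is equidistributed mod }1\}$ for this Borel set, one has $\varphi\mu_{\rho}(A_{s})=1$, and by definition of push-forward $\mu_{\rho}(\varphi^{-1}(A_{s}))=1$. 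Hence for $\mu_{\rho}$-almost every $x$ the sequence $\{s_n\varphi(x)\}_{n\geq 1}$ is equidistributed modulo $1$, which is the first assertion.

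For the $b$-normality statement, I would specialize the preceding step to $s_n=b^n$ for each integer $b\geq 2$, obtaining a $\mu_{\rho}$-full-measure set $E_b$ on which $\varphi(x)$ is $b$-normal, and then take the countable intersection $E=\bigcap_{b\geq 2}E_b$, which still has full $\mu_{\rho}$-measure. Every $x\in E$ then has $\varphi(x)$ normal in every base. No genuine obstacle arises in this argument: the only non-trivial input is the polynomial Fourier decay of $\widehat{\varphi\mu_{\rho}}$ supplied by Theorem \ref{mainthm}, and the remainder is just careful bookkeeping of the push-forward and a countable union over bases.
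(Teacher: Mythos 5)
Your proposal is correct and follows exactly the route the paper intends: the paper states the corollary as an immediate consequence of Theorem \ref{mainthm} (with $g\equiv 1$, giving $|\widehat{\varphi\mu_{\rho}}(\xi)|=O(|\xi|^{-\gamma})$) combined with the Davenport--Erd\H{o}s--LeVeque criterion of Theorem \ref{DEL} applied to $\varphi\mu_{\rho}$. Your additional bookkeeping --- transferring the full-measure statement back to $\mu_{\rho}$ via the push-forward and intersecting over the countably many bases $b$ --- is exactly the routine argument the paper leaves implicit.
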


In fact, Hochman-Shmerkin presented a general criterion for the $b$-normality of a measure $\mu$, which adapts to many general nonlinear iterated function
systems (see Theorem 1.4-1.7 in \cite{HS15}). However, if $\varphi$ is a homomorphism, then $\varphi \mu_{\rho}$ is a self-conformal measure associated with the IFS $\{\varphi^{-1}\circ f_j \circ \varphi\}_{j=1}^m$, which is $C^2$-conjugate to the linear IFS $\{f_j\}_{j=1}^m$. Thus, our result extends Theorem 1.6 in \cite{HS15} in this conjugate-to-linear case by weakening the $C^{\omega}$ condition.

Hochman-Shmerkin's approach is ergodic, i.e., to study the ergodic properties of the so-called scenery flow invariant distributions generated by the Gibbs measure; while our method is analytic, which allows us to obtain more quantitative information on the the decay of the Fourier transform of a certain measure. Moreover, the result of Hochman-Shmerkin does not apply to give more general statements on the equidistribution modulo 1 of $\{s_n \varphi(x)\}_{n\geq 1}$ for any strictly increasing sequence $\{s_n\}_{n\geq 1}$, so our result extends theirs in this respect.

%\section{Preliminary}
 %\subsection{Fourier transform of self-similar measures}
 %\par Return to our hypothesise on self-similar measure, it also can be realized as the distribution of the following random sum:
%$$S=\sum_{n=0}^{\infty}X_n \rho^{n}\eqno{(2.0)}$$
%where $\left\{X_n\right\}, n\geq 0 $ is the identically and independent random variables and satisfy
%$$P(X_n=a_k)=p_k \,\text{for all} \,1\leq k\leq m .$$
%So it is easy to obtain
%$$\hat{\mu}(u)=\prod_{n=0}^{\infty}\sum_{k=1}^{m}p_k\exp(i\pi a_k u\theta^{n}).$$

\section{A Combinatorial lemma}
In this section, we give a combinatorial lemma originated from Erd\H{o}s \cite{Er40} (see also \cite{Kah69, Kau84}), which is important
in the estimation of the oscillatory integral in \eqref{osc-int}. Here and below, $\|x\|$ denotes the distance from a real number $x$ to its nearest integer.
Let $c_0$ be a fixed real number and $\theta>1$. For any $\epsilon>0$ and fixed closed interval $[H_1, H_2]\subset\mathbb{R}$, let
\begin{equation*}
 \Gamma(\epsilon)=\Big\{x\in [H_1,H_2]:\#\big\{1\leq k\leq N:\; \|c_0 \theta^k x\|< \frac{1}{2(1+\theta)}\big\}> (1-\epsilon) N \Big\}.
\end{equation*}
The combinatorial lemma is stated as follows:
\begin{lem}\label{com-lem}
 For any $\epsilon\in (0, 1/2)$, $\Gamma(\epsilon)$ could be covered by $O(\exp(\omega(\epsilon)N))$ intervals of length
$c_0^{-1}\theta^{-N}$ for $N$ large enough, where $\omega(\epsilon)=-\epsilon\log\epsilon-(1-\epsilon)\log(1-\epsilon)+2\epsilon\log(\theta+2)$.
\end{lem}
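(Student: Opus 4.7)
I would follow Erd\H{o}s's strategy. First decompose $\Gamma(\epsilon)$ according to which subset $S\subseteq\{1,\dots,N\}$ of ``good indices'' a point realizes: for $x\in\Gamma(\epsilon)$ set $S(x):=\{1\le k\le N:\|c_0\theta^k x\|<\tfrac{1}{2(1+\theta)}\}$, so $|S(x)|>(1-\epsilon)N$, and with $A_k:=\{x\in\mathbb{R}:\|c_0\theta^k x\|<\tfrac{1}{2(1+\theta)}\}$ this gives
\[
\Gamma(\epsilon)\;\subseteq\;\bigcup_{\substack{S\subseteq\{1,\dots,N\}\\ |S|>(1-\epsilon)N}}E_S,\qquad E_S:=[H_1,H_2]\cap\bigcap_{k\in S}A_k.
\]
Stirling's binomial-tail estimate bounds the number of admissible $S$ by $C_0\exp(H(\epsilon)N)$ with $H(\epsilon)=-\epsilon\log\epsilon-(1-\epsilon)\log(1-\epsilon)$, which accounts for the first part of $\omega(\epsilon)$.

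The heart of the argument is a scale-by-scale inductive cover of each $E_S$ by at most $C_1(\theta+2)^{2|S^{c}|}$ intervals of length $c_0^{-1}\theta^{-N}$. The geometric input is that $A_k$ is a disjoint union of \emph{atoms} of length $c_0^{-1}\theta^{-k}/(1+\theta)$ centred on the arithmetic progression $\{n/(c_0\theta^k)\}_{n\in\mathbb{Z}}$, so any interval of length $c_0^{-1}\theta^{-(k-1)}$ meets at most $\lceil\theta\rceil+1\le\theta+2$ atoms. I would build covers at scales $c_0^{-1}\theta^{-k}$, $k=0,\dots,N$, with the following rule at step $k$: if $k\in S$ (a \emph{good} step), intersecting with $A_k$ forces each surviving component into one of these atoms, which, being shorter than $c_0^{-1}\theta^{-k}$, fits inside a single new interval aligned with the atom lattice, so the cover count does not grow; if $k\notin S$ (a \emph{bad} step), refinement without restriction costs a factor $\le\theta+2$, and a second factor of $\theta+2$ absorbs the boundary slippage at later good steps whose atoms may straddle a boundary created now, giving $(\theta+2)^2$ per bad step. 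Iterating yields $\#\operatorname{cov}(E_S)\le C_1(\theta+2)^{2|S^{c}|}\le C_1(\theta+2)^{2\epsilon N}$.

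Combining the two ingredients gives
\[
\#\operatorname{cov}(\Gamma(\epsilon))\;\le\;C_0 C_1\exp(H(\epsilon)N)\,(\theta+2)^{2\epsilon N}\;=\;O(\exp(\omega(\epsilon)N)),
\]
which is the claimed bound. The hardest step is the ``no-loss-at-good-steps'' claim inside the induction: one must choose the refined intervals with non-dyadic alignment matching the atom lattice of $A_k$ so that the restriction to $A_k$ genuinely cancels the refinement cost of the scale change, and one must correctly account for the $(\theta+2)^2$ boundary-slippage cost at bad/good transitions. Without this delicate cancellation the naive refinement would multiply through to the hopeless bound $\theta^N$ per slice; achieving it is the combinatorial input going back to Erd\H{o}s \cite{Er40} around which the present lemma is designed.
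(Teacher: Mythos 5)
Your proposal is correct and follows essentially the same argument as the paper: the entropy bound $\exp(h(\epsilon)N+o(N))$ on the number of admissible good/bad index patterns, combined with the observation that the count of admissible locations at scale $c_0^{-1}\theta^{-k}$ grows by a factor of at most $\theta+2$ only at a bad index or at the step immediately following one, yielding $(\theta+2)^{2\epsilon N}$. The only difference is bookkeeping: the paper tracks the integer sequence $r_k(x)=$ nearest integer to $c_0\theta^k x$ and proves your ``no-loss-at-good-steps'' claim as a separate lemma ($r_{k+1}$ is uniquely determined by $r_k$ when $|\varepsilon_k|,|\varepsilon_{k+1}|<\tfrac{1}{2(1+\theta)}$), which is exactly your atom-lattice alignment argument in arithmetic form.
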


The lemma says that the set of parameters $x$ such that $c_0 \theta^k x$ is close to an integer for ``most'' indices $k\in\{1,\ldots, N\}$ is very small.
It turns out that the ``bad'' set for the sake of estimating the oscillatory integral in \eqref{osc-int} can be controlled by sets of this kind.
For each $x\in [H_1, H_2]$, write
\begin{equation*}
 c_0 \theta^k x=r_k(x)+\varepsilon_k(x):=r_k+\varepsilon_k,
\end{equation*}
where $r_{k}\in\mathbb{Z}$ and $-\frac{1}{2}<\varepsilon_{k}\leq\frac{1}{2}$. The proof of Lemma \ref{com-lem} relies on the next lemma.

\begin{lem}\label{lem}
 For each value of $r_k$, there are at most $\theta $ + 2 possible choices of $r_{k+1}$. Moreover, if
$\max\left\{|\varepsilon_{k}|,|\varepsilon_{k+1}|\right\}<\frac{1}{2(1+\theta)}$, then $r_{k+1}$ is uniquely determined by $r_k$.
\end{lem}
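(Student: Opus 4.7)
The plan is built around a single arithmetic identity linking consecutive rounding errors. Writing $c_0 \theta^k x = r_k + \varepsilon_k$ and multiplying by $\theta$ gives $c_0 \theta^{k+1} x = \theta r_k + \theta \varepsilon_k$, which must coincide with the level-$(k+1)$ decomposition $r_{k+1} + \varepsilon_{k+1}$. Subtracting yields
\[
r_{k+1} - \theta r_k = \theta\varepsilon_k - \varepsilon_{k+1}.
\]
Once this identity is on the table, both assertions of the lemma reduce to bounding the right-hand side under the hypotheses available in each case.

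For the first assertion, I would use only the a priori constraints $|\varepsilon_k|,|\varepsilon_{k+1}|\leq 1/2$, which force $|r_{k+1} - \theta r_k| \leq (\theta+1)/2$. Thus $r_{k+1}$ lies in a closed interval of length $\theta+1$ centered at $\theta r_k$, and any such interval contains at most $\theta+2$ integers. This bound is uniform in $r_k$, even though for specific values of $r_k$ the true count can be smaller, depending on where $\theta r_k$ sits relative to $\mathbb{Z}$.

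For the ``moreover'' part, I would substitute the strengthened hypothesis $\max\{|\varepsilon_k|,|\varepsilon_{k+1}|\} < \frac{1}{2(1+\theta)}$ into the same identity. The arithmetic $\theta \cdot \frac{1}{2(1+\theta)} + \frac{1}{2(1+\theta)} = \frac{1}{2}$ gives $|r_{k+1} - \theta r_k| < 1/2$, and an open interval of length less than $1$ contains at most one integer; hence $r_{k+1}$ must be the integer nearest $\theta r_k$, a quantity completely determined by $r_k$.

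There is no genuine obstacle here: the lemma is essentially a one-line calculation once the key identity is isolated. The only point requiring any care is in the integer-counting step, where one should avoid implicitly assuming $\theta\in\mathbb{Z}$ or that $\theta r_k$ is favorably positioned relative to the integer lattice. Taking the mildly loose bound $\theta+2$ (rather than a sharper estimate like $\lfloor\theta\rfloor+2$) keeps the count uniform in $r_k$, which is exactly the form needed when these pointwise counts are iterated and multiplied in the proof of Lemma \ref{com-lem}.
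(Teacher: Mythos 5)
Your proof is correct and follows essentially the same route as the paper's: the identity $r_{k+1}-\theta r_k=\theta\varepsilon_k-\varepsilon_{k+1}$, the crude bound $|\varepsilon_k|,|\varepsilon_{k+1}|\leq 1/2$ giving an interval of length $\theta+1$ (hence at most $\theta+2$ integers), and the sharpened hypothesis giving $|r_{k+1}-\theta r_k|<1/2$ for uniqueness. Your remark about not assuming $\theta\in\mathbb{Z}$ is a point the paper glosses over, but it changes nothing of substance.
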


\begin{proof}
Since $c_0 \theta^k x= r_k+\varepsilon_k, \; c_0 \theta^{k+1} x= r_{k+1}+\varepsilon_{k+1}$, we have
\begin{equation*}
 |r_{k+1}-\theta r_{k}|=|\varepsilon_{k+1}-\theta\varepsilon_{k}|.
\end{equation*}
Thus for a given value of $r_k$, $r_{k+1}$ falls into the interval $[\theta r_{k}-\frac{\theta+1}{2},\theta r_{k}-\frac{\theta+1}{2}]$, which means that $r_{k+1}$ has at most $\theta $+2 choices.

\vspace{1mm}
Further, if $\max\left\{|\varepsilon_{k}|,|\varepsilon_{k+1}|\right\}<\frac{1}{2(1+\theta)}$, then
\begin{equation*}
 |r_{k+1}-\theta r_{k}|=|\varepsilon_{k+1}-\theta\varepsilon_{k}|\leq(1+\theta)\max\left\{|\varepsilon_{k}|,|\varepsilon_{k+1}|\right\}<1/2.
\end{equation*}
It follows that $r_{k+1}$ is uniquely determined by $r_k$.
\end{proof}

\noindent\textbf{Proof of Lemma 2.1}
Observe that $x=\frac{r_N(x)}{c_0 \theta^N}+\frac{\epsilon_N(x)}{c_0 \theta^N}\in \big(\frac{r_N(x)}{c_0 \theta^N}-\frac{1}{2 c_0 \theta^N},\frac{r_N(x)}{c_0 \theta^N}+\frac{1}{2 c_0 \theta^N}\big]$. Fix $\epsilon$, we first estimate the numbers of integers $r_N$ such that $r_N=r_N(x)$ for some
$x\in\Gamma(\epsilon)$.

\vspace{0.5mm}
To begin with, for a fixed $x\in\Gamma(\epsilon)$, we have $\#\big\{1\leq k\leq N:\; \|c_0 \theta^k x\|\geq \frac{1}{2(1+\theta)}\big\}< \epsilon N$.
Let $1 \leq i_1<\ldots<i_s\leq N$ be all of the indices $k$ such that $\|c_0 \theta^k x\|\geq \frac{1}{2(1+\theta)}$, or equivalently,  $|\varepsilon_{k}(x)|\geq \frac{1}{2(1+\theta)}$. We know from the definition that $s<\epsilon N$, thus the number of choices of such
$\{i_1, \ldots, i_s\}$ can not exceed
\begin{equation*}
  \sum\limits_{j=0}^{[\epsilon N]+1}\binom{N}{j} \ll \exp\big(h(\epsilon) N+o(N)\big),
\end{equation*}
where the last inequality is by Stirling's formula and $h(t)=-t\log t-(1-t)\log(1-t)$.

\vspace{0.5mm}
On the other hand, take a $\{i_1, \ldots, i_s\}$ as above and fix it, by Lemma \ref{lem}, if $k$ is of the form $i_l$ or $i_l+1$ for some $l=1,\ldots,s$, then there are at most $\theta +2$ possible choices of $r_k$ after $r_{k-1}$ has been given; conversely, if $k$ is not of the form $i_l$ or $i_l+1$ for any $l=1,\ldots,s$, then $r_k$ is uniquely determined by $r_{k-1}$. Note that the number of choices of $r_1$ is less than $c_0\theta(H_2-H_1)+1$, thus for each specific $\{i_1, \ldots, i_s\}$, there exists at most $(\theta +2)^{2s}\big(c_0\theta(H_2-H_1)+1\big)$ values of $r_N$ such that $r_N=r_N(x)$ for some $x$ with $\#\big\{1\leq k\leq N:\; \|c_0 \theta^k x\|\geq \frac{1}{2(1+\theta)}\big\}=s<\epsilon N$.

\vspace{0.5mm}
Therefore, the number of distinct integers $r_N$ such that $r_N=r_N(x)$ for some $x\in\Gamma(\epsilon)$ is less than
\begin{equation*}
 \exp\big(N h(\epsilon)+o(N)\big)(\theta +2)^{2\epsilon N}\big(c_0\theta(H_2-H_1)+1\big) \ll \exp(\omega(\epsilon) N+o(N)).
\end{equation*}
Since each such $r_N$ corresponds to a unique interval of length $c_0^{-1}\theta^{-N}$, we conclude the proof. \par
\qed

\section{Proof of Theorem 1.1}
The proof is divided into three steps:

\vspace{0.5mm}
\textbf{Step 1.} Decomposition of the self-similar measure $\mu_{\rho}$. \par
\vspace{0.5mm}
In this step, we decompose $\mu_{\rho}$ by virtue of its convolution nature, which allows us to do subtle analysis. Assume
$p_l=\max_{1\leq j\leq m} p_j$ and $p_s=\min_{1\leq j\leq m} p_j$, let $\alpha=\frac{\log p_l}{\log\rho}$. Without loss of generality, assume $a_l>a_s$.
Notice that $p_l\in [1/m, 1)$ and $\rho\in (0, 1/m)$, we have $0<\alpha<1$. Take
$\beta\in (1/2, 1)$ such that $(2-\alpha)\beta<1$. Let $|\xi|$ be fixed and large enough. Let $N_1,N_2$ be two positive integers satisfying
\begin{equation*}
 \rho^{-N_1}\leq |\xi|^{\beta}<\rho^{-N_1-1},\quad\rho^{-N_2}\leq |\xi|<\rho^{-N_2-1}.
\end{equation*}
We decompose $\mu_{\rho}$ into $\mu_{\rho}=\mu_{N_1}\ast\eta_{N_1}$, where $\ast$ stands for the convolution operator between measures,
$\mu_{N_1}=\ast_{k=0}^{N_1}(p_1\delta_{\rho^k a_1}+\cdots +p_m\delta_{\rho^k a_m})$ and
$\eta_{N_1}=\ast_{k=N_1+1}^{\infty}(p_1\delta_{\rho^k a_1}+\cdots +p_m\delta_{\rho^k a_m})$.
Then the oscillatory integral in \eqref{osc-int} could be written as
\begin{equation}\label{decomposition}
 \begin{split}
       &\int \exp(2\pi i\xi \varphi(t)) g(t) d\mu_{\rho}(t)\\
      =&\int \exp(2\pi i \xi \varphi(t)) g(t) d\mu_{N_1}\ast\eta_{N_1}(t)\\
      =&\iint \exp(2\pi i \xi \varphi(x+y)) g(x+y) d\mu_{N_1}(x)d\eta_{N_1}(y).
 \end{split}
\end{equation}

\vspace{0.5mm}
\textbf{Step 2.} Linearizing the phase $\varphi$. \par
\vspace{0.5mm}
Let $\widetilde{K}\subset\mathbb{R}$ be the minimal closed interval containing the support of $\mu$.
Put $H_0=\sup_{t\in \widetilde{K}}|\varphi''(t)|$ and $M=\sup_{t\in 2\widetilde{K}}\big\{|g(t)|+|g'(t)|\big\}$.
Split the double integral in \eqref{decomposition} into three parts:
\begin{equation}\label{3parts}
   \iint \exp(2\pi i \xi \varphi(x+y)) g(x+y) d\mu_{N_1}(x)d\eta_{N_1}(y)=\Upsilon_1+\Upsilon_2+\Upsilon_3,
\end{equation}
where
\begin{equation*}
 \Upsilon_1=\iint \Big(\exp\big(2\pi i \xi \varphi(x+y)\big)-\exp\big(2\pi i \xi (\varphi(x)+\varphi'(x)y)\big)\Big)g(x+y) d\mu_{N_1}(x)d\eta_{N_1}(y),
\end{equation*}

\begin{equation*}
 \Upsilon_2=\iint \exp\big(2\pi i \xi (\varphi(x)+\varphi'(x)y)\big)\big(g(x+y)-g(x)\big) d\mu_{N_1}(x)d\eta_{N_1}(y),
\end{equation*}
and
\begin{equation*}
 \Upsilon_3=\iint \exp\big(2\pi i \xi (\varphi(x)+\varphi'(x)y)\big) g(x) d\mu_{N_1}(x)d\eta_{N_1}(y).
\end{equation*}
By Taylor's formula,
\begin{equation*}
 |\varphi(x+y)-\varphi(x)-\varphi'(x)y|< H_0 y^2.
\end{equation*}
Combining the inequality $|\exp(2\pi i x)-1|\leq 2\pi |x|$, we have
\begin{equation*}
 \begin{split}
    |\Upsilon_1|\leq & \iint M \big|\exp\big(2\pi i \xi (\varphi(x+y)-\varphi(x)-\varphi'(x)y)\big)-1\big| d\mu_{N_1}(x)d\eta_{N_1}(y) \\
         \leq & \iint 2\pi M |\xi|\cdot\big|\varphi(x+y)-\varphi(x)-\varphi'(x)y\big| d\mu_{N_1}(x)d\eta_{N_1}(y)\\
         \leq & \iint 2\pi M H_0 |\xi| y^2 d\mu_{N_1}(x)d\eta_{N_1}(y).
 \end{split}
\end{equation*}
Since $\eta_{N_1}$ is supported on the set of points $\big\{\sum_{k=N_1+1}^{\infty}\rho^k X_k\big\}$ with $X_k\in\{a_1,\ldots, a_m\}$ for each $k$. Thus the support of $\eta_{N_1}$ is of length $O(\rho^{N_1+1})=O(|\xi|^{-\beta})$, it follows that
\begin{equation}\label{I_1}
  |\Upsilon_1|=O(|\xi|\cdot|\xi|^{-2\beta})=O(|\xi|^{1-2\beta}).
\end{equation}

For $\Upsilon_2$, similarly as $\Upsilon_1$, we have
\begin{equation}\label{I_2}
  \begin{split}
    |\Upsilon_2|\leq & \iint \big|g(x+y)-g(x)\big| d\mu_{N_1}(x)d\eta_{N_1}(y) \\
         \leq & \iint M y d\mu_{N_1}(x)d\eta_{N_1}(y)=O(|\xi|^{-\beta}).
 \end{split}
\end{equation}

For $\Upsilon_3$, by Fubini's theorem,
\begin{equation*}
  \begin{split}
|\Upsilon_3|
           = & \Big|\int \exp\big(2\pi i \xi \varphi(x)\big) g(x) \Big(\int \exp\big(2\pi i \xi \varphi'(x)y\big)d\eta_{N_1}(y)\Big) d\mu_{N_1}(x)\Big| \\
           = & \Big|\int \exp\big(2\pi i \xi \varphi(x)\big) g(x) \widehat{\eta_{N_1}}(\xi\varphi'(x)) d\mu_{N_1}(x)\Big| \\
        \leq & \int M \big|\widehat{\eta_{N_1}}(\xi\varphi'(x))\big| d\mu_{N_1}(x).
 \end{split}
\end{equation*}
The convolution nature of $\eta_{N_1}$ implies
\begin{equation*}
 |\widehat{\eta_{N_1}}(\xi \varphi'(x))|=\prod_{k=N_1+1}^{\infty}|\Phi(\xi \varphi'(x) \rho^k)|,
\end{equation*}
where $\Phi(t)=\sum_{j=1}^m p_j\exp(2\pi i  a_j t)$. Therefore,
\begin{equation*}
 |\Upsilon_3|\leq M \int \big(\prod_{k=N_1+1}^{\infty}|\Phi(\xi\varphi'(x) \rho^k)|\big) d\mu_{N_1}(x).
\end{equation*}

Recall that $\rho^{-N_2}\leq |\xi|<\rho^{-N_2-1}$, set $N=N_2-N_1-1>0$ and $\theta=\rho^{-1}>1$. Write $\xi=\xi_0 \theta^{N_2}$, then
$1\leq |\xi_0|<\theta$. Since $|\Phi(t)|\leq 1$, it is easy to see that
\begin{equation*}
  \begin{split}
       & \int \big(\prod_{k=N_1+1}^{\infty}|\Phi(\xi\varphi'(x) \rho^k)|\big) d\mu_{N_1}(x)\\
     = & \int \big(\prod_{k=N_1+1}^{\infty}|\Phi(\xi_0\theta^{N_2}\varphi'(x) \rho^k)\big) d\mu_{N_1}(x) \\
  \leq & \int \big(\prod_{k=1}^N |\Phi(\xi_0 \theta^{k} \varphi'(x))|\big) d\mu_{N_1}(x).
 \end{split}
\end{equation*}

Write $\Xi(\xi)=\int \big(\prod_{k=1}^N |\Phi(\xi_0 \theta^{k} \varphi'(x))|\big) d\mu_{N_1}(x)$, now we arrive at the conclusion that
\begin{equation}\label{I_3-1}
 |\Upsilon_3|\leq M \Xi(\xi).
\end{equation}

\vspace{0.5mm}
\textbf{Step 3.} Estimation of the integral $\Xi(\xi)$. \par
\vspace{0.5mm}
Put $H_1=\min_{t\in \widetilde{K}}(a_l-a_s)\varphi'(t)$ and $H_2=\max_{t\in \widetilde{K}}(a_l-a_s)\varphi'(t)$. Fix a sufficiently small $\epsilon>0$
which will be specified later, denote
\begin{equation*}
 \Lambda(\epsilon)=\big\{x\in \widetilde{K}:\; (a_l-a_s)\varphi^{\prime}(x)\in\Gamma(\epsilon)\big\},
\end{equation*}
where $\Gamma(\epsilon)$ is defined as in Section 2 with $c_0=\xi_0$. Then $\Xi(\xi)$ can be divided into two parts:
\begin{equation*}
 \Xi(\xi)=\Big(\int_{\Lambda(\epsilon)} + \int_{\Lambda(\epsilon)^{\complement}}\Big)
          \big(\prod_{k=1}^N |\Phi(\xi_0\theta^k \varphi'(x))|\big) d\mu_{N_1}(x)
        :=\Xi_1(\xi)+\Xi_2(\xi).
\end{equation*}

For each $1\leq k\leq N$, if $\|\xi_0\theta^k (a_l-a_s)\varphi'(x)\|\geq \frac{1}{2(1+\theta)}$, then
\begin{equation*}
 \begin{split}
   |\Phi(\xi_0\theta^k\varphi'(x))|
 = & \big|\sum_{j=1}^m p_j\exp(2 \pi i a_j \xi_0\theta^k\varphi'(x)\big|\\
 = & |p_1 \exp\big(2 \pi i (a_1-a_s) \xi_0\theta^k \varphi'(x))+ \cdots + p_m \exp\big(2 \pi i (a_m-a_s) \xi_0\theta^k \varphi'(x))|\\
\leq & |p_s+p_l \exp\big(2 \pi i (a_l-a_s) \xi_0\theta^k \varphi'(x))|+1-p_l-p_s \\
\leq & \big|p_s+p_l\exp\big(\frac{\pi i}{1+\theta}\big)\big|+1-p_l-p_s \leq 1-\frac{2 p_l}{1+\theta}:=\delta < 1.
 \end{split}
\end{equation*}

Notice that $N=N_2-N_1-1=O(\log |\xi|)$, thus for each $x\in [0, 1]\setminus\Lambda(\epsilon)$,
\begin{equation*}
 \prod_{k=1}^N |\Phi(\xi_0\theta^k \varphi'(x))|<\delta^{\epsilon N}=|\xi|^{-\frac{\epsilon (1-\beta)\log\delta}{\log \rho}}.
\end{equation*}
As a result, we have $\Xi_2(\xi)=O(|\xi|^{-\frac{\epsilon (1-\beta)\log\delta}{\log \rho}})$.

\vspace{0.5mm}
On the other hand, denote the intervals of length $c_0^{-1} \theta^{-N}$ which cover $\Lambda(\epsilon)$ by $I_1, \ldots, I_Q$. It is known by
Lemma \ref{com-lem} that $Q=O(\exp(\omega(\epsilon)N)$ with $\omega(\epsilon)=-\epsilon\log\epsilon-(1-\epsilon)\log(1-\epsilon)+2\epsilon\log(\theta+2)$. For each $1\leq q\leq Q$, let $J_q=(\varphi')^{-1}((a_l-a_s)^{-1}I_q)$, since $\varphi'$ is strictly increasing, we have
\begin{equation*}
 \Lambda(\epsilon)\subset\bigcup\limits_{q=1}^{Q}J_q.
\end{equation*}
It follows that
\begin{equation*}
 \begin{split}
   \Xi_1(\xi)
    &\leq\sum_{q=1}^Q \int_{J_q} \big(\prod_{k=1}^N |\Phi(\xi_0 \theta^k \varphi'(x))|\big) d\mu_{N_1}(x)\\
    & \ll \exp(\omega(\epsilon)N)\max\limits_{1\leq q\leq Q}\mu_{N_1}(J_q).
 \end{split}
\end{equation*}

We know from the definition of $\mu_{N_1}$ and $J_q$ that, the length of each $J_q$ satisfies $|J_q|=O(\rho^{N})=O(|\xi|^{\beta-1})$,
while the gaps between different atoms of the discrete measure $\mu_{N_1}$ are at least $O(\rho^{N_1})=O(|\xi|^{-\beta})$. Therefore, the number of atoms of $\mu_{N_1}$ contained in each $J_q$ is about $O(|\xi|^{2\beta-1})$, which implies that
\begin{equation*}
 \mu_{N_1}(J_q)=O(|\xi|^{2\beta-1}p_l^{N_1})=O(|\xi|^{2\beta-1}\rho^{\alpha N_1})=O(|\xi|^{(2-\alpha)\beta-1}).
\end{equation*}
So we have
\begin{equation*}
  \Xi_1(\xi)=O(\exp(\omega(\epsilon)N))\cdot O(|\xi|^{(2-\alpha)\beta-1})=O(\exp(|\xi|^{-(\frac{\omega(\epsilon)(1-\beta)}{\log\rho}+1-(2-\alpha)\beta)}).
\end{equation*}
Taking $\epsilon\in(0, \delta)$, since $\frac{\omega(\epsilon)(1-\beta)}{\log\rho}+1-(2-\alpha)\beta>\frac{(1-\beta)\epsilon\log\delta}{\log \rho}$,
it follows that
\begin{equation}\label{I_3-2}
  \Xi(\xi)=\Xi_1(\xi) + \Xi_2(\xi)=O(|\xi|^{-\frac{(1-\beta)\epsilon\log\delta}{\log \rho}}).
\end{equation}
From \eqref{decomposition}-\eqref{I_3-2} and the fact that $\beta\in (1/2, 1)$, we obtain \eqref{osc-int}, where the constant $\gamma$ is given by $\min\{2\beta-1, \frac{(1-\beta)\epsilon\log\delta}{\log \rho}\}$ once we take appropriate $\beta$ and $\epsilon$. The second assertion is by taking
$g\equiv 1$. \qed

\section*{Acknowledgements} The authors would like to thank Aihua Fan, Teturo Kamae, Lingmin Liao and Baowei Wang for many useful discussions and valuable suggestions on an earlier draft of this note.

%%%%%%%%%%%%%%%%%%%%%%%%%%%%%%%%%%%%%%%%%%%%%%%%%%%%%%

\end{document}